\theoremstyle{definition}
\newtheorem{cor}{Corollary}[section]
\newtheorem{dfn}[cor]{Definition} 
\newtheorem{lem}[cor]{Lemma}
\newtheorem{prop}[cor]{Proposition}
\newtheorem{thm}[cor]{Theorem}
\newcommand{\N}{\mathbb{N}}
\newcommand{\R}{\mathbb{R}}
\newcommand{\Alg}{\mathrm{Alg}}
\newcommand{\alg}{\mathrm{alg}}
\newcommand{\Eig}{\mathrm{Eig}}
\newcommand{\GL}{\mathrm{GL}}
\newcommand{\Mat}{\mathrm{Mat}}
\newcommand{\ve}{\mathrm{vec}}
\newcommand{\Ve}{\mathrm{Vec}}
\begin{document}

\title{Counting finite-dimensional algebras over finite fields}
\author{Nikolaas D. Verhulst}
\address{TU Dresden\\
	Fachrichtung Mathematik\\
	Insitut f\"ur algebra\\
	01062 Dresden, Germany}
	\email{nikolaas\_damiaan.verhulst@tu-dresden.de}

\begin{abstract}
In this paper, we describe an elementary method for counting the number of non-isomorphic algebras of a fixed dimension over a given finite field. We show how this method works for the explicit example of $2$-dimensional algebras over the field $\mathbb{F}_{2}$.
\end{abstract}

\maketitle

\section*{Introduction}

Classifying finite dimensional algebras over a given field is usually a very hard problem. The first general result was a classification of $2$-dimensional algebras over the base field $\R$, which appeared in 1992 (\cite{AlthoenHansen}). This was generalised by Petersson (\cite{Petersson}), who managed to give a full classification of $2$-dimensional algebras over an arbitrary base field. The methods employed in these papers are quite involved and rely on a large amount of previous work by many illustrious authors. 

Our aim in this paper is to give perhaps not a classification but at least a way to compute the exact number of non-isomorphic $n$-dimensional algebras over a fixed finite field by elementary means. Indeed, nothing more complicated than linear algebra and some very basic results about group actions will be needed: we describe isomorphism classes of $n$-dimensional $K$-algebras as orbits of a certain $\GL_{n}(K)$-action on $\Mat_{n}(K)^{n}$ and use a basic result about group actions to count these orbits. In the last section, we work out a concrete example.

\section{Notation and basics}

Fix a field $K$. In this article, an \emph{algebra} is understood to be a $K$-vector space $A$ equipped with a multiplication, i.e. a bilinear map $A\times A\to A$. If $a,b$ are in $K$, we will write $ab$ for the image of $(a,b)$ under this map. We do not assume algebras to have a unit or to be associative. By the \emph{dimension} of an algebra we mean its dimension as a $K$-vector space. Two algebras $A$ and $A'$ will be called \emph{isomorphic} if there exists a $K$-linear bijection $f:A\to A'$ with $f(ab)=f(a)f(b)$ for all $a,b$ in $A$. The isomorphism class of an algebra $A$ will be denoted by $[A]$. For $n\in\N$, we define $\Alg_{n}(K)$ to be the set of isomorphism classes of $n$-dimensional algebras. 

Given a vector $\mathcal{M}=(M_{i})_{i=1,...,n}$ of $n$ $n\times n$-matrices over $K$, we can define an algebra $\alg(\mathcal{M})$ which is $K^{n}$ as a $K$-vector space and for which multiplication is defined to be the unique bilinear map $K^{n}\times K^{n}\to K^{n}$ with\[e_{i}e_{j}=\sum_{k}(M_{i})_{jk}e_{k}\]
where the $e_{i}$ are the canonical basis vectors of $K^{n}$. Intuitively, this means that multiplying an element $a\in A$ with $e_{i}$ is multiplying the coordinate vector of $a$ (with respect to the canonical basis) with $M_{i}$ and interpreting the result again as a coordinate vector (with respect to the canonical basis). This allows us to define the map \[\alg:\Mat_{n}(K)^{n}\to\Alg_{n}(K),\mathcal{M}\mapsto [\alg(\mathcal{M})]\]
which will play an important role in this paper.

\begin{prop}\label{surjectivity}
The map $\alg$ defined above is surjective.
\end{prop}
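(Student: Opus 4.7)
The plan is to show surjectivity via the standard structure-constants construction. Given an arbitrary isomorphism class $[A] \in \Alg_n(K)$, I will exhibit a tuple $\mathcal{M} \in \Mat_n(K)^n$ such that $\alg(\mathcal{M}) \cong A$.

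First I would pick a representative $A$ of the class and a $K$-basis $b_1, \ldots, b_n$ of $A$. Since the multiplication on $A$ is bilinear, it is completely determined by the $n^2$ products $b_i b_j$, each of which can be expanded uniquely in the chosen basis as $b_i b_j = \sum_k c_{ijk} b_k$ for some structure constants $c_{ijk} \in K$. I would then define $M_i \in \Mat_n(K)$ by $(M_i)_{jk} := c_{ijk}$, and set $\mathcal{M} := (M_1, \ldots, M_n)$.

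The next step is to check that $\alg(\mathcal{M}) \cong A$. The obvious candidate is the $K$-linear map $f : \alg(\mathcal{M}) \to A$ determined by $f(e_i) = b_i$, which is a $K$-linear bijection because it sends one basis to another. For the multiplicative compatibility, the defining formula for $\alg(\mathcal{M})$ gives $f(e_i e_j) = f\bigl(\sum_k (M_i)_{jk} e_k\bigr) = \sum_k c_{ijk} b_k = b_i b_j = f(e_i) f(e_j)$, and then $f(xy) = f(x) f(y)$ for arbitrary $x, y$ follows by bilinearity of both multiplications and $K$-linearity of $f$.

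There is no real obstacle here: the argument is essentially a bookkeeping exercise encoding the standard fact that an algebra structure on $K^n$ is the same datum as a family of structure constants, and the only thing to verify is that the conventions in the definition of $\alg(\mathcal{M})$ match the ones used to extract the $c_{ijk}$ from $A$. The mildest point of care is to make sure the indices on $(M_i)_{jk}$ are read in the same order as in the statement, so that $e_i$ acts on the right of $e_j$ via the $j$-th row of $M_i$ rather than, say, its $j$-th column.
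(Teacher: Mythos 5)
Your proof is correct and follows essentially the same route as the paper: extract the structure constants of $A$ with respect to a chosen basis, encode them as $(M_i)_{jk}$, and check that the linear map $e_i \mapsto b_i$ is a bijection preserving multiplication. The extra care you take with the index convention and the explicit verification on basis elements is exactly the bookkeeping the paper leaves implicit.
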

\begin{proof}
Let $A$ be an $n$-dimensional algebra with basis $a_{1},...,a_{n}$. There are $\alpha_{ij,k}$ in $K$ such that $a_{i}a_{j}=\sum_{k}\alpha_{ij,k}a_{k}$ for all $1\leq i,j\leq n$. Define the matrix $M_{i}$ by putting $(M_{i})_{jk}=\alpha_{ij,k}$ and set $\mathcal{M}=(M_{i})_{i=1,...,n}\in\Mat_{n}(K)^{n}$. There is a unique linear map $\alg(\mathcal{M})\to A,e_{i}\mapsto a_{i}$ which is clearly bijective and which, by construction, preserves multiplication. Hence $[A]=[\alg(\mathcal{M})]$.
\end{proof}

\noindent On the other hand, $\alg$ is clearly not injective, since for any $\mathcal{M}=(M_{i})_{i=1,...,n}$ and $\alpha\in K^{*}$, for example, we have $[\alg(\mathcal{M})]=[\alg(\alpha\mathcal{M})]$. 

\section{A group action on $\Mat_{n}(K)^{n}$}

Recall that for a given set $X$ and a group $G$ with neutral element $e$, a (right) \emph{$G$-action} on $X$ is a map $\phi:X\times G\to X$ such that 
\begin{enumerate}[(1)]
	\item $\phi(x,e)=x$ for any $x\in X$,
	\item $\phi(x,gg')=\phi(\phi(x,g),g')$ for all $g,g'\in G$, $x\in X$.
\end{enumerate}
If $\phi$ is a $G$-action on $X$, then the \emph{$\phi$-orbit} of an element $x\in X$ is the set $G(x)=\left\{\phi(x,g)\mid g\in G\right\}$. The set of orbits is denoted by $X/G$. The \emph{fixpoints} of a $g\in G$ are the elements of $X^{g}=\left\{x\in X\mid \phi(x,g)=x\right\}$.

\begin{lem}\label{groupaction}
The map 
\[\phi:\Mat_{n}(K)^{n}\times \GL_{n}(K)\to \Mat_{n}(K)^{n},\left(\begin{bmatrix}M_{1}\\\vdots\\M_{n}\end{bmatrix},G\right)\mapsto \begin{bmatrix}G^{-1}\sum_{i}G_{i1}M_{i}G\\\vdots\\G^{-1}\sum_{i}G_{in}M_{i}G\end{bmatrix}\]
is a $\GL_{n}(K)$-action on $\Mat_{n}(K)^{n}$.
\end{lem}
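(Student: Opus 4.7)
The plan is to verify the two defining properties of a group action directly from the formula for $\phi$. Although $\phi$ looks a bit cumbersome at first glance, its origin is the natural action of $\GL_{n}(K)$ on $\Mat_{n}(K)^{n}$ induced by change of basis on $K^{n}$: the matrix $G$ turns the canonical basis $e_{1},\dots,e_{n}$ into a new basis $f_{j}=\sum_{i}G_{ij}e_{i}$, and one rewrites the multiplication table of $\alg(\mathcal{M})$ with respect to this new basis. Once the formula for $\phi$ is in hand, the two axioms become purely formal identities.

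Axiom (1) is essentially a one-line check: taking $G$ to be the identity matrix, one has $I^{-1}=I$ and $I_{ij}=\delta_{ij}$, so the $j$-th component of $\phi(\mathcal{M},I)$ collapses to $M_{j}$, and hence $\phi(\mathcal{M},I)=\mathcal{M}$.

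The main work is axiom (2). I would fix $G,G'\in\GL_{n}(K)$ and compute the $j$-th entries of $\phi(\mathcal{M},GG')$ and $\phi(\phi(\mathcal{M},G),G')$ separately. Reading off directly from the formula, the first is
\[(GG')^{-1}\left(\sum_{i}(GG')_{ij}M_{i}\right)(GG').\]
For the second, setting $\mathcal{N}:=\phi(\mathcal{M},G)$ with $N_{k}=G^{-1}\sum_{i}G_{ik}M_{i}G$, substituting into the formula for $\phi(\mathcal{N},G')$, pulling the constants $(G')^{-1}$, $G^{-1}$, $G$, $G'$ outside the summation signs, and interchanging the order of the two sums, the inner sum becomes $\sum_{k}G_{ik}G'_{kj}=(GG')_{ij}$. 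Combined with $(G')^{-1}G^{-1}=(GG')^{-1}$, this produces exactly the expression displayed above, establishing the axiom.

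There is no real obstacle here; the only thing requiring care is index bookkeeping—one must keep the index labelling the $n$ matrices in $\mathcal{M}$ clearly separated from the entry indices of each individual matrix. Once the notation is set up carefully, both axioms reduce to standard facts about matrix multiplication and inversion.
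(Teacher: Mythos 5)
Your proposal is correct and follows essentially the same route as the paper: a direct verification of the two group-action axioms, with the composition axiom reduced to the identities $\sum_{k}G_{ik}G'_{kj}=(GG')_{ij}$ and $(G')^{-1}G^{-1}=(GG')^{-1}$ after pulling the scalar entries through and exchanging the two sums. Nothing is missing; the index bookkeeping you describe is exactly what the paper's computation carries out.
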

\begin{proof}
It is clear that $\phi(\mathcal{M},\mathbbm{1}_{n})=\mathcal{M}$ for all $\mathcal{M}$ in $\Mat_{n}(K)^{n}$. Take $\mathcal{M}=(M_{i})_{i=1,...,n}$ in $\Mat_{n}(K)^{n}$ and $G,G'$ in $\GL_{n}(K)$. We have to show $\phi(\mathcal{M},GG')=\phi(\phi(\mathcal{M},G),G')$. The term on the right is 
\begin{align*}
\phi\left(\begin{bmatrix}
G^{-1}\sum_{i}G_{i1}M_{i}G\\\vdots\\G^{-1}\sum_{i}G_{in}M_{i}G
\end{bmatrix},G'\right)&=\begin{bmatrix}
G'^{-1}\sum_{j}G'_{j1}\left(G^{-1}\sum_{i}G_{ij}M_{i}G\right)G'\\\vdots\\
G'^{-1}\sum_{j}G'_{jn}\left(G^{-1}\sum_{i}G_{ij}M_{i}G\right)G'
\end{bmatrix}\\&=
\begin{bmatrix}
(GG')^{-1}\sum_{i}(GG')_{i1}M_{i}GG'\\
\vdots\\
(GG')^{-1}\sum_{i}(GG')_{in}M_{i}GG'
\end{bmatrix}
\end{align*}
which is the term on the left.
\end{proof}

\begin{lem}\label{orbitisisoclass}
Two elements $\mathcal{M},\mathcal{M}'$ of $\Mat_{n}(K)^{n}$ are in the same $\phi$-orbit if and only if $\alg(\mathcal{M})$ and $\alg(\mathcal{M}')$ are isomorphic. 
\end{lem}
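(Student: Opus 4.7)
The action $\phi$ is designed to encode a change of basis at the level of structure constants, so the plan is to establish a bijective correspondence between algebra isomorphisms $\alg(\mathcal{M})\to\alg(\mathcal{M}')$ and elements $G\in\GL_{n}(K)$ satisfying $\mathcal{M}'=\phi(\mathcal{M},G)$. Both directions will follow from one and the same bilinear computation, so in effect I only need to do one calculation carefully.

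\textbf{Forward direction.} Assume $\mathcal{M}'=\phi(\mathcal{M},G)$ for some $G\in\GL_{n}(K)$. I define the $K$-linear map $T_{G}:K^{n}\to K^{n}$ by $T_{G}(e_{j})=\sum_{i}G_{ij}e_{i}$; it is a bijection since $G$ is invertible. To show $T_{G}$ is an isomorphism $\alg(\mathcal{M})\to\alg(\mathcal{M}')$, it suffices by bilinearity to verify $T_{G}(e_{i}e_{j})=T_{G}(e_{i})T_{G}(e_{j})$ for all $i,j$, where the left-hand product is computed in $\alg(\mathcal{M})$ and the right-hand one in $\alg(\mathcal{M}')$. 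Expanding the left side using the structure constants $(M_{i})_{jk}$ and the right side using $(M'_{p})_{qr}$, comparing coefficients of each $e_{l}$, and substituting the definition of $\phi(\mathcal{M},G)$ reduces the required identity to a direct algebraic manipulation that the formula for $\phi$ is set up to make hold.

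\textbf{Converse.} Conversely, assume $f:\alg(\mathcal{M})\to\alg(\mathcal{M}')$ is an algebra isomorphism. Since $f$ is a $K$-linear bijection $K^{n}\to K^{n}$, there is a unique $G\in\GL_{n}(K)$ with $f(e_{j})=\sum_{i}G_{ij}e_{i}$, i.e.\ $f=T_{G}$. Applying $T_{G}(e_{i}e_{j})=T_{G}(e_{i})T_{G}(e_{j})$ and comparing coefficients of $e_{l}$ as above gives a system of scalar equations equivalent to the single vector equation $\mathcal{M}'=\phi(\mathcal{M},G)$, so $\mathcal{M}$ and $\mathcal{M}'$ lie in the same $\phi$-orbit.

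\textbf{Main obstacle.} There is no conceptual difficulty; the only real hurdle is bookkeeping. The calculation involves three summation indices and one must take care not to conflate the structure constants of $\alg(\mathcal{M})$ with those of $\alg(\mathcal{M}')$. Once the expansion is written out and the sum over $k$ on one side is matched with sums over $p,q$ on the other, the formula defining $\phi$ appears essentially by inspection, and both implications fall out simultaneously.
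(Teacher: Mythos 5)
Your strategy is the paper's strategy: realize an isomorphism as an invertible matrix $G$ acting on coordinates, and match multiplicativity on basis vectors against the defining formula of $\phi$. But as written there is a genuine gap, because the single step that carries the entire content of the lemma --- the coefficient comparison --- is never performed, only asserted to ``hold by inspection,'' and the one concrete claim you make about its outcome is not what the computation gives. With the paper's conventions (products in $\alg(\mathcal{M})$ computed as $xy=\sum_{i}x_{i}M_{i}y$, which is how the relation defining $\alg(\mathcal{M})$ is used in the argument), suppose $f(x)=Gx$ is multiplicative from $\alg(\mathcal{M})$ to $\alg(\mathcal{M}')$. Setting $x=e_{l}$ in $f(x)f(y)=f(xy)$ gives
\[GM_{l}y=\sum_{i}G_{il}M'_{i}Gy\ \text{ for all }y,\qquad\text{i.e.}\qquad M_{l}=G^{-1}\sum_{i}G_{il}M'_{i}G,\]
that is $\mathcal{M}=\phi(\mathcal{M}',G)$: the matrix of an isomorphism $\alg(\mathcal{M})\to\alg(\mathcal{M}')$ moves $\mathcal{M}'$ to $\mathcal{M}$, not $\mathcal{M}$ to $\mathcal{M}'$ as you claim. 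Already for $n=1$, where $\phi(M,G)=MG$, the map $x\mapsto Gx$ is an isomorphism $\alg(M)\to\alg(M')$ precisely when $M'=MG^{-1}$, not $MG$: over $\mathbb{F}_{5}$ with $M=1$, $G=2$, your pairing predicts $x\mapsto 2x$ is an isomorphism $\alg(1)\to\alg(2)$, yet $f(e_{1}e_{1})=2e_{1}$ while $f(e_{1})f(e_{1})=4\cdot 2\,e_{1}=3e_{1}$. So if you attempted your forward direction literally (assume $\mathcal{M}'=\phi(\mathcal{M},G)$ and verify $T_{G}(e_{i})T_{G}(e_{j})=T_{G}(e_{i}e_{j})$ for $T_{G}:\alg(\mathcal{M})\to\alg(\mathcal{M}')$), the comparison would fail in general; what is true is that $T_{G}$ is an isomorphism $\alg(\mathcal{M}')\to\alg(\mathcal{M})$, equivalently that $T_{G^{-1}}$ works in your direction.

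This mismatch does not threaten the lemma itself --- both ``same $\phi$-orbit'' and ``isomorphic'' are symmetric relations, so the equivalence survives once the computation is done with the correct pairing --- but it shows that the bookkeeping you deferred is exactly where the mathematics lives. A complete proof must expand $(Ge_{i})(Ge_{j})=\sum_{k}G_{ki}M'_{k}(Ge_{j})$ in $\alg(\mathcal{M}')$ and compare it with $G(e_{i}e_{j})=GM_{i}e_{j}$, as the paper does; and one must also fix the index convention carefully, since reading $e_{i}e_{j}=\sum_{k}(M_{i})_{jk}e_{k}$ literally as rows of $M_{i}$ while letting $G$ act on column vectors produces a conjugation by $G^{T}$ and does not reproduce $\phi$ on the nose. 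Without those few explicit lines, the statement that ``the formula for $\phi$ is set up to make the identity hold'' is not an argument but a restatement of the lemma.
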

\begin{proof}
Assume $\alg(\mathcal{M})$ and $\alg(\mathcal{M}')$ to be isomorphic for some $\mathcal{M}=(M_{i})_{i=1,...,n}$ and $\mathcal{M}'=(M'_{i})_{i=1,...,n}$ in $\Mat_{n}(K)^{n}$. Take an isomorphism $f:\alg(\mathcal{M})\to \alg(\mathcal{M}')$. Since $\alg(\mathcal{M})$ and $\alg(\mathcal{M}')$, considered as $K$-vector spaces, are just $K^{n}$, there must be a $G\in \GL_{n}(K)$ such that $f(x)$ is just $Gx$ for all $x\in \alg(\mathcal{M}')$. As $f$ is an isomorphism, we find 
\begin{align*}
G\sum_{i}x_{i}M_{i}y&=f(xy)=f(x)f(y)=Gx\cdot Gy=\sum_{i}\left(\sum_{j}G_{ij}x_{j}M_{i}'\right)Gy
\end{align*}
for arbitrary $x,y\in\alg(\mathcal{M}')$. In particular, if $x=e_{l}$, we find $GM_{l}y=\sum_{i}G_{il}M_{i}'Gy$ for all $y$, so $M_{l}=G^{-1}\sum_{i}G_{il}M_{i}'G$ showing $\phi(\mathcal{M}',G)=\mathcal{M}$.

Suppose now that, for given $\mathcal{M}$ and $\mathcal{M}'$ in $\Mat_{n}(K)^{n}$, there is some $G\in\GL_{n}(K)$ with $\phi(\mathcal{M'},G)=\mathcal{M}$. $G$ induces a function $f:K^{n}\to K^{n},x\mapsto Gx$ which is bijective as $G$ is invertible. To prove that $f$ is an isomorphism between $\alg(\mathcal{M})$ and $\alg(\mathcal{M}')$, it suffices to show $f(e_{i}e_{j})=f(e_{i})f(e_{j})$ for all $i$, $j$ since $f$ is linear. We find 
\begin{align*}
f(e_{i})f(e_{j})=(Ge_{i})(Ge_{j})&=\left(\sum_{k}G_{ki}M'_{k}\right)(Ge_{j})=\left(\sum_{k}G_{ki}M'_{k}G\right)e_{j}\\
&=GG^{-1}\left(\sum_{k}G_{ki}M'_{k}G\right)e_{j}=GM_{i}e_{j}= f(e_{i}e_{j}),
\end{align*} 
the penultimate equality following from $\phi(\mathcal{M}',G)=\mathcal{M}$.
\end{proof}

\section{Counting orbits}
\emph{From now on, we assume $K$ to be a finite field with $q$ elements.} As a consequence of proposition \ref{orbitisisoclass}, we find that $\alg$ induces a well-defined, injective map \[\overline{\alg}:\Mat_{n}(K)^{n}/\GL_{n}(K)\to\Alg_{n}(K),G(\mathcal{M})\mapsto[\alg(\mathcal{M})]\]
which is also surjective by \ref{surjectivity}. The number of isomorphism classes of $n$-dimensional $K$-algebras therefore equals the number of $\phi$-orbits of $\Mat_{n}(K)^{n}$. The following well-known result result from the theory of group actions will help us count the latter:

\begin{prop}[Burnside's lemma]\label{Burnside}
Suppose $\phi$ is an action of a finite group $G$ on a finite set $X$.  Then
\[|X/G|=\frac{1}{|G|}\sum_{g\in G}|X^{g}|.\]
\end{prop}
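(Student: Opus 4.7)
The plan is to prove Burnside's lemma by the standard double-counting trick applied to the set of fixed pairs
\[S=\{(x,g)\in X\times G\mid \phi(x,g)=x\}.\]
First I would count $|S|$ by summing over $g\in G$: for each $g$, the number of $x$ with $(x,g)\in S$ is exactly $|X^{g}|$, so $|S|=\sum_{g\in G}|X^{g}|$. This gives the right-hand side of the claimed identity (up to the factor $1/|G|$).

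Next I would count $|S|$ by summing over $x\in X$: for each $x$, the number of $g$ with $(x,g)\in S$ is the cardinality of the stabiliser $G_{x}=\{g\in G\mid \phi(x,g)=x\}$, so $|S|=\sum_{x\in X}|G_{x}|$. The key input here is the orbit--stabiliser theorem, which states that $|G(x)|\cdot|G_{x}|=|G|$; I would briefly recall its proof (the map $g\mapsto\phi(x,g)$ descends to a bijection between $G_{x}\backslash G$ and $G(x)$). This yields $|G_{x}|=|G|/|G(x)|$ for every $x$.

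Finally I would regroup the sum $\sum_{x\in X}|G_{x}|$ orbit by orbit: pick one representative from each orbit in $X/G$, so that
\[\sum_{x\in X}|G_{x}|=\sum_{O\in X/G}\sum_{x\in O}\frac{|G|}{|O|}=\sum_{O\in X/G}|G|=|G|\cdot|X/G|.\]
Equating the two expressions for $|S|$ and dividing by $|G|$ gives the lemma.

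There is no real obstacle here; the only subtlety is making sure the orbit--stabiliser bijection is set up correctly for a right action (it sends the coset $G_{x}g$ to $\phi(x,g)$, and well-definedness plus injectivity both follow immediately from the action axioms recalled just before the statement). Everything else is bookkeeping on a finite set.
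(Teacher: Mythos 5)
Your argument is correct and complete: the double count of the set of fixed pairs $S=\{(x,g)\mid\phi(x,g)=x\}$, combined with the orbit--stabiliser theorem (correctly adapted to a right action via the bijection $G_{x}g\mapsto\phi(x,g)$ between right cosets and the orbit) and the orbit-by-orbit regrouping, is exactly the standard proof of Burnside's lemma. Note that the paper itself does not prove this proposition at all --- it simply cites Rotman's textbook --- so your write-up supplies the argument the paper outsources; it is the same classical proof one finds in that reference, and there is nothing to object to beyond the (already handled) care needed with right-action conventions.
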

\begin{proof}
Cfr. e.g. \cite{Rotman}, p.58.
\end{proof}

To use this lemma, we need to know the number of fixpoints of a given invertible matrix $M$. For that, we need the following definition:

\begin{dfn}
For a matrix $M\in\Mat_{k\times l}(K)$, the \emph{vectorisation} of $M$ is the vector $\ve(M)\in K^{kl}$ obtained by stacking the columns of $M$, the first column being on top. For an element $\mathcal{M}=(M_{i})_{i=1,...,n}\in\Mat_{n}(K)^{n}$, we write $\Ve(\mathcal{M})$ for the single vector consisting of the vectorisations of all the $M_{i}$. For more on the vectorisation operation, we refer to \cite{VecTrick}.
\end{dfn}

\begin{lem}\label{fixpoints}
For an invertible matrix $M$, we have
\[|X^{M}|=q^{\dim\Eig_{1}(M^{T}\otimes M^{T}\otimes M^{-1})}\]
where $\Eig_{1}(A)$ denotes the eigenspace of the matrix $A$ with eigenvalue $1$.
\end{lem}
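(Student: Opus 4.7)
The plan is to translate the fixed-point condition into a linear equation, then apply the vectorisation trick twice to massage it into the eigenvector form claimed by the lemma.

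First, I would unpack what it means for $\mathcal{M}=(M_1,\ldots,M_n)$ to lie in $X^M$: the definition of $\phi$ in Lemma \ref{groupaction} gives the equivalent condition
\[ M\,M_l\,M^{-1} = \sum_{i} M_{il}\,M_i \qquad (l=1,\ldots,n). \]
This is a homogeneous $K$-linear system in the entries of the $M_i$, so its solution set is a $K$-subspace of $\Mat_n(K)^n$, and therefore $|X^M|=q^d$ where $d$ is the dimension of that subspace. The task reduces to expressing $d$ as the $1$-eigenspace dimension of the stated Kronecker product.

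Next I would apply vectorisation to each of the $n$ equations. Using the standard identity $\ve(AXB)=(B^T\otimes A)\ve(X)$, the left-hand side becomes $(M^{-T}\otimes M)\ve(M_l)$, while the right-hand side is $\sum_i M_{il}\ve(M_i)$. Assembling the column vectors $\ve(M_1),\ldots,\ve(M_n)$ into an $n^2\times n$ matrix $V$, the $n$ equations combine into the single matrix identity
\[ (M^{-T}\otimes M)\,V \;=\; V\,M. \]
Applying $\ve$ once more, using $\ve(AX)=(I\otimes A)\ve(X)$ on the left and $\ve(XB)=(B^T\otimes I)\ve(X)$ on the right, and noting that $\ve(V)=\Ve(\mathcal{M})$, this becomes
\[ \bigl(I_n\otimes M^{-T}\otimes M\bigr)\Ve(\mathcal{M}) \;=\; \bigl(M^T\otimes I_{n^2}\bigr)\Ve(\mathcal{M}). \]
Multiplying both sides by $M^{-T}\otimes I_{n^2}$ (which commutes with $I_n\otimes M^{-T}\otimes M$ in the relevant sense, since $(M^{-T}\otimes I)(I\otimes M^{-T}\otimes M)=M^{-T}\otimes M^{-T}\otimes M$) rewrites the condition as
\[ \bigl(M^{-T}\otimes M^{-T}\otimes M\bigr)\Ve(\mathcal{M}) \;=\; \Ve(\mathcal{M}). \]

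Finally I would observe that $\Ve$ is a $K$-linear bijection $\Mat_n(K)^n\to K^{n^3}$, so $|X^M|$ equals the cardinality of $\Eig_1(M^{-T}\otimes M^{-T}\otimes M)$. Since for any invertible $A$ one has $\Eig_1(A)=\Eig_1(A^{-1})$, and since $(M^{-T}\otimes M^{-T}\otimes M)^{-1}=M^T\otimes M^T\otimes M^{-1}$, this eigenspace coincides with $\Eig_1(M^T\otimes M^T\otimes M^{-1})$. Its cardinality over $K$ is $q$ to its $K$-dimension, which is exactly the claimed formula.

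The routine but error-prone part is the double vectorisation: one must keep transposes and tensor factor orderings straight, especially the step that produces the final $M^{-T}\otimes M^{-T}\otimes M$. Swapping the role of $M$ and $M^{-1}$ via the observation $\Eig_1(A)=\Eig_1(A^{-1})$ is what makes the ``natural'' answer match the cleaner form stated in the lemma, so the main conceptual point is simply to recognise that the linear fixed-point equation is a disguised eigenvalue-$1$ condition for a triple Kronecker product.
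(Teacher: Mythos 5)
Your proof is correct and follows essentially the same route as the paper: translate the fixpoint condition into a linear system, vectorise it via the identity $\ve(AXB)=(B^{T}\otimes A)\ve(X)$, and recognise the result as an eigenvalue-$1$ condition for a triple Kronecker product, so that $|X^{M}|=q^{\dim\Eig_{1}(M^{T}\otimes M^{T}\otimes M^{-1})}$. The only cosmetic difference is that you conjugate the equation the other way and land on $M^{-T}\otimes M^{-T}\otimes M$, then pass to the stated form via $\Eig_{1}(A)=\Eig_{1}(A^{-1})$, whereas the paper vectorises the condition as written and obtains $M^{T}\otimes M^{T}\otimes M^{-1}$ directly.
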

\begin{proof}
Suppose $\mathcal{N}=(N_{i})_{i=1,...,n}$ is a fixpoint of $M$, i.e. 
\[
N_{l}=M^{-1}\sum_{i}M_{il}N_{i}M\quad \text{for all $l$}.\tag{$\dagger$}\label{fixpointeq}
\] It is known (see e.g. \cite{VecTrick}) that, for arbitrary $A,B$ in $\Mat_{n}(K)$, $M^{-1}AM=B$ is equivalent to $\ve(B)=(M^{T}\otimes M^{-1})\ve(A)$. From this, we conclude: \[\Ve(M^{-1}N_{i}M)_{i=1,...,n}=( \mathbbm{1}_{n}\otimes M^{T}\otimes M^{-1})\Ve(\mathcal{N}).\] We have furthermore that $\Ve((\sum_{i} M_{il}O_{i})_{l=1,...,n})=(M^{T}\otimes \mathbbm{1}_{n}\otimes\mathbbm{1}_{n})\Ve(\mathcal{O})$ for any $\mathcal{O}=(O_{i})_{i=1,...,n}\in\Mat_{n}(K)^{n}$. Consequently, \eqref{fixpointeq} is equivalent to 
\begin{align*}
\Ve(\mathcal{N})&=(M^{T}\otimes\mathbbm{1}_{n}\otimes\mathbbm{1}_{n})(\mathbbm{1}_{n}\otimes M^{T}\otimes M^{-1})\Ve(\mathcal{N})\\
&= (M^{T}\otimes M^{T}\otimes M^{-1})\Ve(\mathcal{N}),
\end{align*} 
so $\mathcal{N}$ is a fixpoint of $M$ if and only if $\Ve(\mathcal{N})$ is an eigenvector of $M^{T}\otimes M^{T}\otimes M^{-1}$ with eigenvalue $1$.

 %In general, the eigenvectors of $A\otimes B$ for any matrices $A$ and $B$ are of the form $v\otimes w$ where $v$ is an eigenvector of $A$ and $w$ is an eigenvector of $B$. Moreover, if the eigenvalues associated to $v$ and $w$ are $\lambda $ and $\mu$ respectively, then $v\otimes w$ has eigenvalue $\lambda\mu$. Of course, if $\lambda_{1},...,\lambda_{k}$ are the eigenvalues of a matrix $A$, then $\lambda_{1},...,\lambda_{k}$ are also the eigenvalues of $A^{T}$ while $\lambda_{1}^{-1},...,\lambda_{k}^{-1}$ are the eigenvalues of $A^{-1}$.
 %
 %$M^{T}\otimes M^{T}\otimes M^{-1}$ therefore has an eigenvector with eigenvalue $1$ if and only if there are eigenvalues $\lambda, \mu$ of $M$ such that $\lambda\mu$ is an eigenvalue of $M$ as well. In fact, we have 
%\begin{align*}
 %\dim\Eig_{1}(M^{T}\otimes M^{T}\otimes M^{-1})=\sum_{(\alpha,\beta)\in \overline{K}^{2}}\dim\Eig_{\alpha}(M)\dim\Eig_{\beta}(M)\dim\Eig_{\alpha\beta}(M)
 %\end{align*}
%which concludes the proof. 
\end{proof}

\begin{thm}\label{numberofisoclasses}
The number of non-isomorphic $n$-dimensional $K$-algebras is \[|\Alg_{n}(K)|=\frac{1}{|\GL_{n}(K)|}\sum_{M\in\GL_{n}(K)}q^{\dim\Eig_{1}(M^{T}\otimes M^{T}\otimes M^{-1})}.\tag{$\ddagger$}\label{isoclasseq}\]
\end{thm}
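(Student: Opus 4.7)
The plan is to chain together the three results already established: the bijection between orbits and isomorphism classes, Burnside's lemma, and the fixpoint count of Lemma~\ref{fixpoints}. There is essentially no new content to prove; the theorem is just the combinatorial payoff of the setup.

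First I would invoke Proposition~\ref{surjectivity} and Lemma~\ref{orbitisisoclass} to assert that the induced map $\overline{\alg}$ is a bijection between the orbit set $\Mat_{n}(K)^{n}/\GL_{n}(K)$ and $\Alg_{n}(K)$. This gives
\[|\Alg_{n}(K)|=|\Mat_{n}(K)^{n}/\GL_{n}(K)|.\]
Since $K$ is finite with $q$ elements, both $\GL_{n}(K)$ and $\Mat_{n}(K)^{n}$ are finite, so Burnside's lemma (Proposition~\ref{Burnside}) applies to the action $\phi$ of Lemma~\ref{groupaction}, yielding
\[|\Mat_{n}(K)^{n}/\GL_{n}(K)|=\frac{1}{|\GL_{n}(K)|}\sum_{M\in \GL_{n}(K)}|X^{M}|,\]
where $X=\Mat_{n}(K)^{n}$ and $X^{M}$ denotes the set of $\phi$-fixpoints of $M$.

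Finally, I would substitute the formula $|X^{M}|=q^{\dim\Eig_{1}(M^{T}\otimes M^{T}\otimes M^{-1})}$ from Lemma~\ref{fixpoints} into the right-hand side to obtain equation~\eqref{isoclasseq}. The main (and only) thing to be careful about is to verify that the hypotheses of Burnside are met, which reduces to noting that $|K|<\infty$ forces both $X$ and $\GL_{n}(K)$ to be finite; I do not anticipate any genuine obstacle.
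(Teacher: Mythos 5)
Your proposal is correct and follows essentially the same route as the paper: the paper's proof likewise combines Lemma~\ref{orbitisisoclass} (together with the surjectivity from Proposition~\ref{surjectivity}, stated just before the theorem) with Burnside's lemma and Lemma~\ref{fixpoints}. Your explicit remark that finiteness of $K$ guarantees the hypotheses of Burnside's lemma is a small but harmless addition.
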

\begin{proof}
By lemma \ref{orbitisisoclass}, the number of non-isomorphic $n$-dimensional $k$-algebras is the number of $\phi$-orbits. By \ref{Burnside} and \ref{fixpoints}, this is equal to the given formula.
\end{proof}

\noindent Note that, for any invertible $M$, we have \[\dim\Eig_{1}(M^{T}\otimes M^{T}\otimes M^{-1})\leq n^{3}\] and that equality holds precisely when $M=\mathbbm{1}_{n}$. This gives an obvious lower bound on the number of non-isomorphic $n$-dimensional algebras over the field with $q$ elements:\[\frac{q^{n^{3}}}{(q^{n}-1)\cdots(q^{n}-q^{n-1})}.\] For a fixed $n$, this bound will become sharp as $q$ tends to infinity.

\section{Example: the case $n=2$, $q=2$}

Any element of $\GL_{2}(K)$ has, counting multiplicities, two eigenvalues in the algebraic closure of $K$. Clearly, either both or none are elements of $K$ which makes counting invertible matrices with eigenvalues in $K$ considerably easier. Indeed, the only possible Jordan normal forms for a $2\times 2$ matrix are
\[J_{1}=\begin{bmatrix}\alpha&0\\0&\beta\end{bmatrix}\quad\text{and}\quad J_{2}=\begin{bmatrix}\alpha&1\\0&\alpha\end{bmatrix}\]for some $\alpha,\beta$ in the algebraic closure of $K$. Since $M=SJS^{-1}$, $M'=S'J'S'^{-1}$ implies $M\otimes M'=(S\otimes S')(J\otimes J')(S^{-1}\otimes S'^{-1})$ and since the Jordan normal form of $M$ is also the Jordan normal form of $M^{T}$, it follows that every $M^{T}\otimes M^{T}\otimes M^{-1}$ must be conjugate either to  
\[M_{\alpha,\beta}=\begin{bmatrix}\alpha&&&&&&&\\&\alpha&&&&&&\\&&\alpha&&&&&\\&&&\alpha^{2}\beta^{-1}&&&&\\&&&&\alpha^{-1}\beta ^{2}&&&\\&&&&&\beta&&\\&&&&&&\beta&\\&&&&&&&\beta\end{bmatrix}\]
or to
\[N_{\alpha}=\begin{bmatrix}\alpha&-1&1&-\alpha^{-1}&1&-\alpha^{-1}&\alpha^{-1}&-\alpha^{-2}\\&\alpha&&1&&1&&-\alpha^{-1}\\&&\alpha&-1&&&1&-\alpha^{-1}\\&&&\alpha&&&&1\\&&&&\alpha&-1&1&-\alpha^{-1}\\&&&&&\alpha&&1\\&&&&&&\alpha&-1\\&&&&&&&\alpha\end{bmatrix}.\]
for some $\alpha,\beta\in \overline{K}$. Note that $\dim\Eig_{1}(N_{1})=3$ unless the characteristic of $K$ is $2$, in which case $\dim\Eig_{1}(N_{1})=4$. If $\alpha\neq1$, we obviously have $\dim\Eig_{1}(N_{\alpha})=0$. For $M_{\alpha,\beta}$, the dimension of the eigenspace associated to $1$ depends heavily on $\alpha$ and $\beta$, ranging from $8$ if $\alpha=\beta=1$ to $0$ if $1\notin\left\{\alpha,\beta,\alpha^{2}\beta^{-1},\alpha^{-1}\beta^{2}\right\}$. 

We will do the computations explicitly for the concrete example of $K=\mathbb{F}_{2}$. There are $6$ invertible matrices, namely
\[\mathbbm{1}_{2},\quad \begin{bmatrix}0&1\\1&0\end{bmatrix},\quad\begin{bmatrix}1&0\\1&1\end{bmatrix},\quad \begin{bmatrix}1&1\\0&1\end{bmatrix},\quad \begin{bmatrix}0&1\\1&1\end{bmatrix},\quad \begin{bmatrix}1&1\\1&0\end{bmatrix}.\]
The identity obviously yields a contribution of $2^{8}$. The next three are conjugate to $J_{2}$ with $\alpha=1$, therefore yielding a contribution of $2^{4}$ each. The last two have no eigenvalues over $K$. Their eigenvalues are the roots $t_{1},t_{2}$ of the polynomial $x^{2}+x+1$. As these roots satisfy $t_{1}^{2}=t_{2},t_{2}^{2}=t_{1}$, both matrices give a contribution of $2^2$. This gives a total of $2^{8}+3\cdot 2^{4}+2\cdot 2^{2}=312$ which divided by the total number of invertible matrices gives $312/6=52$. This number fits the formulae which were obtained, using completely different methods, by Petersson and Scherer in \cite{PeterssonScherer}.

\section{Outlook}

Theorem \ref{numberofisoclasses} suggest the following question: how many invertible $n\times n$-matrices $M$ have \[\dim\Eig(M^{T}\otimes M^{T}\otimes M^{-1},1)=k\] for a given $k\in\N$? If $q$ and $n$ are fixed, this is a finite problem and can therefore be calculated, but this is rather tedious and time-consuming. Having a closed formula in $q$ and $n$ would be nice.

On the algebraic side, it would be interesting to see whether the method described in this paper can also be used to count certain subclasses of algebras, like alternating algebras, associative algebras, or division algebras.

\end{document}